\newcommand{\referenza}{}
\newtheorem{prop}{Proposition}[section]
\newtheorem{thm}[prop]{Theorem}
\newtheorem*{thm*}{Theorem \referenza}
\newtheorem{cor}[prop]{Corollary}
\theoremstyle{definition}
\newtheorem{rem}[prop]{Remark}
\newcommand{\R}{\mathbb{R}}
\newcommand{\C}{\mathbb{C}}
\newcommand{\sspace}{\cdot}
\newcommand{\ssspace}{\cdot\cdot}
\DeclareMathOperator{\im}{i}
\DeclareMathOperator{\de}{d}
\title{Stability of holomorphically-parallelizable manifolds}
\author{Daniele Angella}
\address[Daniele Angella]{Istituto Nazionale di Alta Matematica\\
Dipartimento di Matematica e Informatica\\
Universit\`{a} di Parma\\
Parco Area delle Scienze 53/A, 43124\\
Parma, Italy}
\email{daniele.angella@gmail.com}
\curraddr{Centro di Ricerca Matematica ``Ennio de Giorgi''\\
Collegio Puteano, Scuola Normale Superiore\\
Piazza dei Cavalieri 3\\
56126 Pisa, Italy
}
\email{daniele.angella@sns.it}
\author{Adriano Tomassini}
\address[Adriano Tomassini]{Dipartimento di Matematica e Informatica\\
Universit\`{a} di Parma \\
Parco Area delle Scienze 53/A, 43124 \\
Parma, Italy}
\email{adriano.tomassini@unipr.it}
\keywords{holomorphically-parallelizable, quotients of complex Lie groups, deformations of complex structures}
\thanks{The first author is granted with a research fellowship by Istituto Nazionale di Alta Matematica INdAM, and is supported by the Project PRIN ``Varietà reali e complesse: geometria, topologia e analisi armonica'', by the Project FIRB ``Geometria Differenziale e Teoria Geometrica delle Funzioni'', by SNS GR14 grant ``Geometry of non-Kähler manifolds'', and by GNSAGA of INdAM. The second author is supported by the Project PRIN ``Varietà reali e complesse: geometria, topologia e analisi armonica'', by Project FIRB ``Geometria Differenziale Complessa e Dinamica Olomorfa'', and by GNSAGA of INdAM.\\[5pt]
To appear in {\em C. R. Math. Acad. Sci. Paris}}
\subjclass[2010]{32M10, 32G05, 53C30}
\begin{document}

\begin{abstract}
 We prove a stability theorem for families of holomorphically-parallelizable manifolds in the category of Hermitian manifolds.
\end{abstract}

\maketitle

\section*{Introduction}

By a classical theorem by K. Kodaira and D.~C. Spencer, \cite[Theorem 15]{kodaira-spencer-3}, small deformations of compact complex manifolds admitting a K\"ahler metric still admit K\"ahler metrics. This is a consequence of the harmonicity property of K\"ahler metrics and of Hodge theory on compact K\"ahler manifolds. On the other side, H. Hironaka provided in \cite{hironaka} an example of a complex-analytic family of compact complex manifolds being K\"ahler except as for the central fibre, which is only Mo\v\i\v{s}hezon. In other words, K\"ahlerness is not a closed property under deformations. It is expected that limits of projective manifolds are Mo\v\i\v{s}hezon, and limits of K\"ahler manifolds are in class $\mathcal{C}$ of Fujiki, see \cite{demailly-paun, popovici-invent}. 
Note that $\partial\overline\partial$-Lemma is an invariant property under images of holomorphic birational maps, \cite[Theorem 5.22]{deligne-griffiths-morgan-sullivan}. In particular, compact complex manifolds being Mo\v\i\v{s}hezon or belonging to class $\mathcal{C}$ of Fujiki satisfy the $\partial\overline\partial$-Lemma, \cite[Corollary 5.23]{deligne-griffiths-morgan-sullivan}.
Hence, in view of the above conjectures, in \cite{angella-kasuya-2, angella-tomassini-3}, the behaviour of the $\partial\overline\partial$-Lemma property under deformation is investigated. In particular, \cite[Corollary 2.7]{angella-tomassini-3} provides another argument for proving the stability of $\partial\overline\partial$-Lemma under small deformations; see the references therein for different proofs. While, in \cite[\S4, Corollary 6.1]{angella-kasuya-2}, it is provided an example showing that $\partial\overline\partial$-Lemma is not stable under limits. Note in fact that the structures on the holomorphically-parallelizable Nakamura manifold studied in \cite[\S4]{angella-kasuya-2} are not in class $\mathcal{C}$ of Fujiki. In any case, nilmanifolds and solvmanifolds (that is, compact quotients of connected simply-connected nilpotent, respectively solvable, Lie groups by co-compact discrete subgroups,) provide a possibly useful class of examples for investigating the above questions. In fact, K\"ahlerness 
for nilmanifolds is characterized by $\partial\overline\partial$-Lemma, see \cite[Theorem 1, Corollary]{hasegawa}, which is in turn characterized in terms of Bott-Chern cohomology, \cite[Theorem B]{angella-tomassini-3}. On the other hand, several results concerning computation of Bott-Chern cohomology for nilmanifolds and solvmanifolds are known, see, e.g., \cite{angella-kasuya-1, angella-1} and the references therein. When restricting to the class of nilmanifolds, K\"ahlerness is a closed properties under deformations. This follows by a theorem by A. Andreotti, H. Grauert, and W. Stoll in \cite{andreotti-stoll}. More precisely, they proved a stability result for complex-analytic families of complex tori.

In this short note, we prove a similar result than A. Andreotti, H. Grauert, and W. Stoll for the class of holomorphically-parallelizable manifolds, in the category of compact Hermitian manifolds. As pointed out by the Referee, it remains an open question whether the result may be stated in the category of compact complex manifolds.

\bigskip

\noindent{\sl Acknowledgments.} The authors would like to thank Paul Gauduchon and the anonymous Referees for their valuable suggestions.

\section{Main results}

A compact complex manifold is called {\em holomorphically-parallelizable} if its holomorphic tangent bundle is holomorphically-trivial, see \cite[page 771]{wang}. A structure theorem for holomorphically-parallelizable manifolds was proven by H.-C. Wang. More precisely, holomorphically-parallelizable manifolds have a complex Lie group as universal covering.

\begin{thm}[{\cite[Theorem 1]{wang}}]
 Let $X$ be a holomorphically-parallelizable manifold. Then $X$ is (biholomorphic to) a quotient $\left. G \middle\slash D \right.$ where $G$ is a connected simply-connected complex Lie group and $D$ is a discrete subgroup.
\end{thm}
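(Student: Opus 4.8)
The plan is to extract from holomorphic parallelizability a finite-dimensional complex Lie algebra of complete holomorphic vector fields, integrate it to a connected simply-connected complex Lie group acting holomorphically and transitively on $X$, and then read off the quotient description from the stabilizer of a point.

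First I would fix a holomorphic frame $V_1,\dots,V_n$ of the holomorphic tangent bundle $T^{1,0}X$, where $n=\dim_{\C}X$; such a frame exists precisely because $X$ is holomorphically-parallelizable. Since $\{V_k\}_k$ frames $T^{1,0}X$, the Lie brackets expand as $[V_i,V_j]=\sum_k c_{ij}^k\,V_k$, and the coefficients $c_{ij}^k$ are then global holomorphic functions on $X$. The first key observation is that, $X$ being compact and connected, each $c_{ij}^k$ is constant by the maximum principle. Therefore $\mathfrak{g}:=\operatorname{span}_{\C}\{V_1,\dots,V_n\}$ is closed under the bracket and is an $n$-dimensional complex Lie algebra with structure constants $c_{ij}^k$. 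Equivalently, dualising to the holomorphic coframe $\varphi^1,\dots,\varphi^n$, this says that the $\varphi^i$ satisfy Maurer--Cartan equations $\de\varphi^k=-\frac12\sum_{i,j}c_{ij}^k\,\varphi^i\wedge\varphi^j$ with constant coefficients.

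Next I would integrate $\mathfrak{g}$ to a group action. Let $G$ be the connected simply-connected complex Lie group with Lie algebra $\mathfrak{g}$. Because $X$ is compact, every holomorphic vector field on $X$ is complete and its flow extends to a holomorphic action of the additive group $\C$ (so that the biholomorphism group of $X$ is a complex Lie group with Lie algebra the holomorphic vector fields); applying this to the finite-dimensional subalgebra $\mathfrak{g}$ yields a holomorphic action $G\times X\to X$. The differential at the identity of the orbit map $g\mapsto g\cdot x_0$ is the evaluation $\mathfrak{g}\to T^{1,0}_{x_0}X$, $V\mapsto V_{x_0}$, which is an isomorphism since the $V_i$ frame the tangent space. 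Hence every orbit is open; as $X$ is connected, the action is transitive, and as the orbit map is a local biholomorphism the stabilizer $D$ of $x_0$ is discrete.

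Finally, transitivity identifies $X$ with the homogeneous space $\left.G\middle\slash D\right.$ through the orbit map, which is a $G$-equivariant bijective local biholomorphism, hence a biholomorphism, with $D\le G$ the discrete stabilizer; this is the claimed description. I expect the main obstacle to lie in the integration step rather than in the algebra: one must verify that the fields in $\mathfrak{g}$ are not merely real-complete but generate a genuinely holomorphic action with complex time parameter, so that $G$, and not only a real form of it, acts by biholomorphisms. Compactness of $X$ is what makes both the completeness and the constancy of the structure functions work; once the holomorphic $G$-action is secured, transitivity and discreteness of the stabilizer follow immediately from the frame condition.
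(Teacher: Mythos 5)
The paper itself gives no proof of this statement---it is quoted as Wang's theorem directly from \cite[Theorem 1]{wang}---but your reconstruction is correct and is essentially Wang's original argument: the structure functions $c_{ij}^k$ are constant by the maximum principle on the compact manifold $X$, the resulting $n$-dimensional complex Lie algebra $\mathfrak{g}$ of holomorphic vector fields (which, note, is all of $H^0(X,T^{1,0}X)$, since any holomorphic coefficient functions are constant) integrates to a holomorphic action of the simply-connected group $G$, and the open-orbit/discrete-stabilizer argument identifies $X$ with $\left. G \middle\slash D \right.$. The one step deserving the care you already flag is the integration step, which is covered either by Palais' integration theorem for finite-dimensional algebras of complete vector fields or by the Bochner--Montgomery theorem that $\operatorname{Aut}(X)$ is a complex Lie group with Lie algebra the holomorphic vector fields, so your sketch stands as a complete outline.
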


Holomorphically-parallelizable manifolds having a complex solvable Lie group as universal covering were studied by I. Nakamura in \cite{nakamura}. He initiated a classification of holomorphically-parallelizable solvmanifolds up to complex dimension $5$ in \cite[\S6]{nakamura}, then completed by D. Guan in \cite{guan}. Moreover, by explicitly constructing the Kuranishi family of deformations of some holomorphically-parallelizable solvmanifolds of complex dimension $3$, in \cite[\S3]{nakamura}, it was proved that being holomorphically-parallelizable is not a stable property under small deformations of the complex structure, \cite[page 86]{nakamura}. A detailed study of holomorphically-parallelizable nilmanifolds, and of their Kuranishi space and stability was done by S. Rollenske in \cite{rollenske-jems}.

\medskip

The structure theorem by H.-C. Wang allows to generalize and simplify a stability result by A. Andreotti, H. Grauert, and W. Stoll, \cite[Theorem 8]{andreotti-stoll}.
In the proof below, the classical and well-known Montel theorem, (also called generalized Vitali theorem,) is used.
\begin{thm}[{Montel theorem; see, e.g., \cite[Proposition 6]{narasimhan}}]
 Let $\mathcal{F}=\{f\}$ be a family of holomorphic functions on an open set $\Omega\subseteq\C^n$ such that, for any compact set $K\subseteq\Omega$, there exists a positive constant $M_K$ such that, for any $z\in K$, for any $f\in\mathcal{F}$, it holds $|f(z)|<M_K$. Then any sequence $\{f_n\}_n\subseteq\mathcal{F}$ contains a subsequence which converges uniformly on compact subsets of $\Omega$.
\end{thm}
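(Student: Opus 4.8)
The plan is to reduce the statement to the classical Arzelà--Ascoli theorem; the decisive point, which is precisely where holomorphy enters, is that local uniform boundedness of a family of holomorphic functions forces local equicontinuity via the Cauchy estimates.

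First I would fix a compact set $K\subseteq\Omega$ and choose $\rho>0$ so small that the compact neighbourhood $K':=\{w\in\C^n \st \mathrm{dist}(w,K)\le\rho\}$ is still contained in $\Omega$; let $M:=M_{K'}$ be the bound granted by the hypothesis on $K'$. For each $w\in K$ and each index $j$, applying the one-variable Cauchy integral formula in the $j$-th coordinate over the circle of radius $\rho$ yields the Cauchy estimate $\left|\frac{\partial f}{\partial z_j}(w)\right|\le\frac{M}{\rho}$, uniformly in $f\in\mathcal{F}$. Integrating these bounded first derivatives along the segment joining two points $z,\zeta\in K$ with $|z-\zeta|<\rho$ (which lies in $K'$) produces a uniform Lipschitz estimate $|f(z)-f(\zeta)|\le C\,|z-\zeta|$, with $C=C(n,M,\rho)$ independent of $f$; hence $\mathcal{F}$ is equicontinuous on $K$.

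Next I would invoke Arzelà--Ascoli: restricted to the compact $K$, the family $\mathcal{F}$ is uniformly bounded and equicontinuous, so it is relatively compact in the topology of uniform convergence, and any sequence $\{f_n\}_n\subseteq\mathcal{F}$ admits a subsequence converging uniformly on $K$. To globalize, I would exhaust $\Omega$ by an increasing sequence of compact sets $K_1\subseteq K_2\subseteq\cdots$ with $K_m\subseteq\mathrm{int}(K_{m+1})$ and $\bigcup_m K_m=\Omega$, extract nested subsequences convergent on $K_1$, then on $K_2$, and so on, and pass to the diagonal subsequence, which converges uniformly on each $K_m$ and therefore on every compact subset of $\Omega$. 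Finally, the limit is holomorphic, being a locally uniform limit of holomorphic functions: it inherits the Cauchy integral representation in each variable (equivalently, one applies Morera's theorem slicewise), so it is holomorphic.

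The hard part is the implication from boundedness to equicontinuity; this is exactly where the holomorphy hypothesis is indispensable, as the analogous statement fails for merely continuous families, and it is secured by the Cauchy estimates on the first derivatives. Once equicontinuity is available, the remaining ingredients---Arzelà--Ascoli, the diagonal extraction, and the holomorphy of the limit---are entirely standard.
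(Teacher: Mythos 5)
The paper offers no proof of this statement: it is quoted as a classical tool, with a pointer to \cite[Proposition 6]{narasimhan}, and is then invoked in the proof of Theorem \ref{thm:main}. Your argument is exactly the standard proof of that classical result---Cauchy estimates turn local uniform boundedness into local equicontinuity, Arzel\`a--Ascoli gives uniform convergence on each compact set, and a diagonal extraction over a compact exhaustion globalizes---so it is correct and coincides with the proof the paper implicitly defers to. One detail to tighten: you establish the bound $\left|\frac{\partial f}{\partial z_j}(w)\right|\le \frac{M}{\rho}$ only at points $w\in K$, but you then integrate the derivative along a segment joining $z,\zeta\in K$, and that segment in general leaves $K$ (it only lies in $K'$); the fix is the usual two-radius argument: run the Cauchy estimate at every point of the $\frac{\rho}{2}$-neighbourhood of $K$, using polydiscs of radius $\frac{\rho}{2}$ (which stay inside $K'$, where $|f|\le M$), to get $\left|\frac{\partial f}{\partial z_j}\right|\le \frac{2M}{\rho}$ there, and restrict to $|z-\zeta|<\frac{\rho}{2}$ so the segment stays in that neighbourhood. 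This is a routine repair, not a gap in the idea. Note also that the statement itself only asserts locally uniform convergence of a subsequence; the holomorphy of the limit, which you prove at the end via Morera, is strictly speaking extra---though it is precisely what the paper needs later, when the limits $\varphi^j_\alpha(\cdot,0)$ of the coefficient functions must again be holomorphic.
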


We can now state and prove the main result of this note.

\begin{thm}\label{thm:main}
 Let $\left\{ \left( X_t , g_t \right) \right\}_{t \in (-\varepsilon,1)}$ be a smooth family of compact Hermitian manifolds, with $\varepsilon>0$ small enough. Suppose that $X_t$ is holomorphically-parallelizable for any $t\in(0,1)$, with a pointwise $g_t$-orthonormal co-frame $\left\{ \varphi^j(t) \right\}_j$ of holomorphic $1$-forms.
 Then $X_0$ is holomorphically-parallelizable.
\end{thm}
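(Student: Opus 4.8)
The plan is to show that $X_0$ admits $n = \dim_{\mathbb{C}} X_0$ holomorphic $1$-forms that are everywhere linearly independent, which by duality furnishes a holomorphic trivialization of the tangent bundle. The idea is to obtain these forms as limits (as $t \to 0^+$) of the given pointwise $g_t$-orthonormal holomorphic co-frames $\{\varphi^j(t)\}_j$.

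Let me think about how to execute this.

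First, I need to set up the geometry. We have a smooth family $\{(X_t, g_t)\}$ of compact Hermitian manifolds. The underlying differentiable manifold is fixed, say $M$, and the complex structure $J_t$ varies smoothly. For each $t \in (0,1)$, $X_t$ is holomorphically parallelizable with a pointwise $g_t$-orthonormal coframe $\{\varphi^j(t)\}_{j=1}^n$ of holomorphic $1$-forms. The orthonormality is the crucial hypothesis: it gives a uniform pointwise bound $|\varphi^j(t)|_{g_t} = 1$. Since the metrics $g_t$ vary smoothly on the compact manifold $M$ over the closed parameter interval, they are uniformly comparable — there is a fixed background metric $g_0$ (or any auxiliary metric) with respect to which the $g_t$-unit-norm forms $\varphi^j(t)$ have coefficients uniformly bounded on all of $M$, uniformly in $t$. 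This uniform bound is what will feed Montel's theorem.

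Next comes the compactness/convergence step, which is the heart of the argument. Covering $M$ by finitely many coordinate charts (compatible with the varying complex structures — here one uses that $J_t \to J_0$ smoothly, so on each chart the $J_t$-holomorphic coordinates can be arranged to converge), I would express each $\varphi^j(t)$ locally as a $(1,0)$-form with holomorphic coefficient functions. The uniform pointwise norm bound transfers, via the uniform comparability of the metrics, to a uniform sup-bound on these holomorphic coefficient functions on compact subsets. Montel's theorem (the Vitali-type version quoted above) then lets me extract, along a sequence $t_k \to 0^+$, a subsequence along which each $\varphi^j(t_k)$ converges uniformly on compact subsets. A diagonal argument over the finitely many charts produces a single sequence $t_k \to 0^+$ along which all the coefficient functions converge. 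The limit coefficient functions are holomorphic (uniform limits of holomorphic functions are holomorphic), and because $J_{t_k} \to J_0$ the limit forms are $(1,0)$-forms for the complex structure $J_0$. One must also check that the limits are genuinely holomorphic $1$-forms on $X_0$, i.e. $\overline{\partial}_0$-closed: since $\overline{\partial}_{t}\varphi^j(t) = 0$ and $\overline{\partial}_t$ depends continuously on $t$, passing to the limit gives $\overline{\partial}_0 \varphi^j(0) = 0$, so the limits $\varphi^j(0) := \lim_k \varphi^j(t_k)$ are holomorphic $1$-forms on $X_0$.

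Finally I need linear independence at every point. Here the orthonormality again pays off: for each fixed $t$ and each point, the Gram matrix of $\{\varphi^j(t)\}_j$ with respect to $g_t$ is the identity, so the wedge $\varphi^1(t) \wedge \cdots \wedge \varphi^n(t)$ has pointwise $g_t$-norm identically equal to $1$ (up to a fixed normalization constant depending only on $n$). This nonvanishing is a closed condition that survives the uniform limit: since $g_{t_k} \to g_0$ and $\varphi^j(t_k) \to \varphi^j(0)$ uniformly, the top wedge $\varphi^1(0) \wedge \cdots \wedge \varphi^n(0)$ also has constant $g_0$-norm equal to $1$, hence is nowhere zero. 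Therefore $\{\varphi^j(0)\}_j$ is a pointwise basis of holomorphic $(1,0)$-forms on $X_0$; its dual frame of holomorphic vector fields trivializes the holomorphic tangent bundle, and $X_0$ is holomorphically parallelizable. The main obstacle I anticipate is the first step — arranging coordinates that depend nicely on $t$ and verifying that the uniform pointwise norm bound really does translate into the hypothesis of Montel's theorem on holomorphic coefficients, since the notion of ``holomorphic coefficient'' itself drifts with $J_t$; once the convergence is set up correctly, the nonvanishing of the limit coframe follows cleanly from the normalization.
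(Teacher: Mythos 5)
Your proposal is correct, and its skeleton matches the paper's proof: express the coframes in local $J_t$-holomorphic coordinates depending on $t$, extract limits along a sequence $t_k\to 0^+$ by Montel's theorem chart-by-chart with a diagonal argument, and then show the limit holomorphic $1$-forms are everywhere linearly independent. However, at the two technical pivots you exploit the \emph{pointwise} orthonormality more directly than the paper does, and this changes the tools in a genuinely useful way. For the hypothesis of Montel's theorem, the paper only integrates orthonormality into an $\mathrm{L}^2$-bound, $\left(\varphi^j(t),\varphi^j(t)\right)_t=\mathrm{Vol}(X,g_t)$, and then needs classical interior estimates to convert $\mathrm{L}^2$-bounds into sup-bounds on compact subsets; you obtain the sup-bounds in one step from $|\varphi^j(t)|_{g_t}\equiv 1$ together with uniform positivity of $\left(g_t^{\alpha\bar\beta}\right)$ on compact subsets for $t$ near $0$. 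More significantly, for the nonvanishing of the limit coframe the paper proves only that $\det\left(\varphi^j_\alpha(\cdot,0)\right)$ is \emph{not identically zero}, via the integral identity $\int_X \left|\det\left(\varphi^j_\alpha(t)\right)\right|^2 = c(n)\,\mathrm{Vol}(X,g_t)\to c(n)\,\mathrm{Vol}(X,g_0)>0$, and must then invoke Bochner's theorem (the several-variables Hurwitz theorem: a local-uniform limit of nowhere-vanishing holomorphic functions is either identically zero or nowhere vanishing) to upgrade this to \emph{nowhere zero}. Your argument --- the Gram determinant of a pointwise orthonormal coframe is $1$, so $\left|\varphi^1(t_k)\wedge\cdots\wedge\varphi^n(t_k)\right|_{g_{t_k}}\equiv 1$, and this identity survives the pointwise limit because $g_{t_k}\to g_0$ and the coefficients converge locally uniformly, giving $\left|\varphi^1(0)\wedge\cdots\wedge\varphi^n(0)\right|_{g_0}\equiv 1$ --- reaches nowhere-vanishing directly and dispenses with Bochner/Hurwitz altogether; it in fact shows the limit coframe is itself pointwise $g_0$-orthonormal, which also subsumes the paper's separate verification that the limits are $\mathrm{L}^2$-orthonormal. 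The one place where you are no more rigorous than the paper is the existence of local $J_t$-holomorphic coordinates varying smoothly in $t$ up to $t=0$ (Newlander--Nirenberg with parameters): the paper encodes this by asserting a holomorphic coordinate chart on the total family, and you correctly flag it as the delicate set-up step, so your treatment is on equal footing there.
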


\begin{proof}
 First of all, by the Ehresmann theorem, for any $t\in(-\varepsilon, 1)$, we see $X_t=(X,J_t)$ where $\{J_t\}_{t\in(-\varepsilon,1)}$ is a family of complex structures on the differentiable manifold $X$ varying smoothly in $t$.
 
 By definition, the holomorphic tangent bundle $T^{1,0}X_t$ of $X_t$ is holomorphically-trivial for any $t\in (0,1)$. Equivalently, the holomorphic co-tangent bundle $\left(T^{1,0}X_t\right)^*$ of $X_t$ is holomorphically-trivial for any $t\in (0,1)$.
 Hence, by the assumptions, we choose $\left\{\varphi^1(t),\ldots, \varphi^n(t)\right\}$ global pointwise $g_t$-orthonormal co-frame of holomorphic $1$-forms on $X_t$ depending smoothly on $t$, where $n$ denotes the complex dimension of $X_t$.
 
 Denote by $\left(\sspace, \ssspace\right)_t$ the induced $\mathrm{L}^2$-Hermitian product on $1$-forms, defined as $\left( \varphi, \psi \right)_t:=\int_X \varphi\wedge*_{g_t}\bar\psi$, where $*_{g_t}$ denotes the Hodge-$*$-operator associated to $g_t$.

 For any fixed $z_0 \in X_0$, consider a local holomorphic coordinate chart
 $$ \left( U \times (-\delta,\delta), \, \left( z^1=x^1+\im x^2, \ldots, z^n=x^{2n-1}+\im x^{2n}, t \right) \right) $$
 centered at $(z_0,0)$ on $\left\{X_t\right\}_{t\in(-\varepsilon,1)}$. Locally on $U \times (0,\delta)$, for $j\in\{1,\ldots,n\}$,
 $$ \varphi^{j}(z,t) \;\stackrel{\text{loc}}{=}\; \sum_{\alpha=1}^{n} \varphi^j_{\alpha}(z^1,\ldots,z^n,t)\, \de z^\alpha \qquad \text{ in } U \times (0,\delta) \;, $$
 where $\left\{ \varphi^j_{\alpha}(z^1,\ldots, z^n,t) \right\}_{\alpha}$ are smooth in $(z^1,\ldots, z^n,t)$ and holomorphic in $(z^1,\ldots,z^n)$.
 
 We claim that, for any $\alpha \in \{1,\ldots,n\}$, the set $\left\{ \varphi^j_{\alpha}(z^1,\ldots, z^n,t) \right\}_{t\in\left(0,\frac{\delta}{2}\right]}$ is a uniformly-bounded family of holomorphic functions on compact subsets. More precisely, this follows from the following two observations. Fix an open relatively compact $V$ in $U$ and consider $t$ varying in $\left(0,\frac{\delta}{2}\right]$. First, there exists a positive constant $C$ such that, for every $(z,t)\in \overline{V} \times \left(0,\frac{\delta}{2}\right]$, it holds
 $$ \sum_{\alpha,\beta=1}^{n} g_t^{\alpha,\bar\beta}(z) u_\alpha \bar u_\beta \;\geq\; C\, \sum_{\gamma=1}^{n} \left|u_\gamma\right|^2 \qquad \text{ and } \qquad \sqrt{\det\left(g_{t,\; \alpha,\beta}(z)\right)_{\alpha,\beta}} \;\geq\; C $$
 where $\left(u_\alpha\right)_{\alpha\in\{1,\ldots,n\}}$ is a vector in $\C^n$.
 Second, for any $j\in\{1,\ldots,n\}$, the $1$-form $\varphi^{j}(t)$ has uniformly-bounded norm with respect to the $\mathrm{L}^2$-Hermitian product induced by $g_t$. Therefore we have
 \begin{eqnarray*}
  \lefteqn{ \max_{t\in\left[0,\frac{\delta}{2}\right]} \mathrm{Vol}(X,g_t) } \\[5pt]
  &\geq& \left( \varphi^j(t) , \varphi^j(t) \right)_t \\[5pt]
  &=& \int_{X} \sum_{\alpha,\beta} g_t^{\alpha\bar\beta}(z) \varphi^j_{\alpha}(z,t) \bar\varphi^j_{\beta}(z,t) \, \sqrt{\det\left(g_{t,\; \alpha,\beta}(z)\right)_{\alpha,\beta}} \, \de x^1 \wedge\cdots\wedge \de x^{2n} \\[5pt]
  &\geq& \tilde C^2 \, \int_{V} \sum_{\gamma=1}^{n} \left| \varphi^j_{\gamma}(z,t) \right|^2 \, \de x^1 \wedge\cdots\wedge \de x^{2n}
 \end{eqnarray*}
 from which we get
 $$ C^{-2} \;\geq\; \int_V \left| \varphi^j_{\gamma}(z,t) \right|^2 \, \de x^1 \wedge\cdots\wedge \de x^{2n} \;. $$
 
 Classical estimates now imply that, given a compact subset $K$ in $V$, there exists a constant $A_K>0$ such that $\left|\varphi^j_\gamma(z,t)\right|\leq A_k$ for any $(z,t)\in K\times \left(0,\frac{\delta}{2}\right]$.
 Then we apply the Montel theorem. In fact, for fixed $j$ and $\gamma$, the family $\left\{\varphi^j_{\gamma}(z^1,\ldots,z^n,t)\right\}_{t\in\left(0, \frac{\delta}{2}\right]}$ is a family of holomorphic functions on $V$ uniformly bounded on compact subsets $K$ in $V$. Then, up to pick-out a subsequence, $\varphi^j_\alpha(z^1,\ldots,z^n,t)$ converges uniformly on compact subsets, as $t\to 0$, to a holomorphic function $\varphi^j_\alpha(z^1,\ldots,z^n,0)$ on $V$.

 Now, we take another local holomorphic coordinate chart intersecting the first one. Then we extract another subsequence in such a way that the convergence holds for both charts. By continuing in this way, we can construct a set $\left\{ \varphi^{1}(0),\ldots,\varphi^{n}(0) \right\}$ of holomorphic $1$-forms on $X_0$.
 
 We claim that this set is orthogonal with respect to the $\mathrm{L}^2$-Hermitian product associated to $g_0$. Indeed, consider a smooth partition of unity $\{\rho_\ell\}_{\ell}$ associated to a covering $\{U_\ell\}_{\ell}$ with coordinate holomorphic charts as in notations above. We have
 \begin{eqnarray*}
  \lefteqn{\left( \varphi^j(0), \varphi^k(0) \right)_{0}} \\[5pt]
  &=& \sum_\ell \int_{U_\ell} \rho_\ell\, \sum_{\alpha,\beta=1}^n g^{\alpha\bar\beta}_{\ell,\,0}(z) \varphi^j_{\ell,\, \alpha}(z,0)\overline{\varphi^k_{\ell,\, \beta}(z,0)} \, \sqrt{\det\left(g_{t,\; \alpha,\beta}(z)\right)_{\alpha,\beta}} \, \de x^1 \wedge\cdots\wedge \de x^{2n} \\[5pt]
  &=& \sum_\ell \int_{U_\ell} \rho_\ell\, \sum_{\alpha,\beta=1}^n \lim_{t\to 0} g^{\alpha\bar\beta}_{\ell,\,t} (z) \varphi^j_{\ell,\,\alpha}(z,t)\overline{\varphi^k_{\ell\,\beta}(z,t)} \, \sqrt{\det\left(g_{t,\; \alpha,\beta}(z)\right)_{\alpha,\beta}} \, \de x^1 \wedge\cdots\wedge \de x^{2n} \\[5pt]
  &=& \lim_{t\to 0} \sum_\ell \int_{U_\ell} \rho_\ell\, \sum_{\alpha,\beta=1}^n g^{\alpha\bar\beta}_{\ell,\,t} (z) \varphi^j_{\ell,\,\alpha}(z,t)\overline{\varphi^k_{\ell\,\beta}(z,t)} \, \sqrt{\det\left(g_{t,\; \alpha,\beta}(z)\right)_{\alpha,\beta}} \, \de x^1 \wedge\cdots\wedge \de x^{2n} \\[5pt]
  &=& \lim_{t\to 0} \left( \varphi^j(t), \varphi^k(t) \right)_t \;=\; \lim_{t\to0} \delta^{jk} \;=\; \delta^{jk} \;.
 \end{eqnarray*}
 
 Now, we claim that $\left\{\varphi^j(0)\right\}_{j\in\{1,\ldots,n\}}$ are in fact linearly independent at every point.
 Indeed, consider a coordinate holomorphic chart as in notations above. Then 
 $$\varphi^{1}(z,t)\wedge\cdots\wedge\varphi^{n}(z,t) = \det\left(\varphi^{j}_{\alpha}(z,t)\right)_{j,\alpha} \, \de z^1 \wedge\cdots\wedge\de z^n\,.$$
 The holomorphic functions $\det\left(\varphi^{j}_{\alpha}(z,t)\right)_{j,\alpha}$ converge to $\det\left(\varphi^{j}_{\alpha}(z,0)\right)_{j,\alpha}$ for $t\to0$ uniformly on compact subsets. We show that $\det\left(\varphi^{j}_{\alpha}(z,0)\right)_{j,\alpha}$ is not identically zero.
 Indeed, $\{\varphi^1\lfloor_p(t),\ldots,\varphi^n\lfloor_p(t)\}$ being orthonormal with respect to $g_t\lfloor_p$ at any point $p\in X$, we have:
 \begin{eqnarray*}
  \lefteqn{ \int_X \left| \det\left(\varphi^{j}_{\alpha}(0)\right)_{j,\alpha}\right|^2 \de z^1\wedge\cdots \de z^n\wedge\de\bar z^1\wedge\cdots\de\bar z^n } \\[5pt]
  &=& \lim_{t\to0} \int_X \left| \det\left(\varphi^{j}_{\alpha}(t)\right)_{j,\alpha}\right|^2 \de z^1\wedge\cdots \de z^n\wedge\de\bar z^1\wedge\cdots\de \bar z^n\\[5pt]
  &=& \lim_{t\to0} c(n)\cdot \mathrm{Vol}(X,g_t) \\[5pt]
  &=& c(n) \cdot \mathrm{Vol}(X,g_0) \;>\; 0 \;,
 \end{eqnarray*}
 where $c(n)$ is a constant depending just on the complex dimension. It follows that $\left| \det\left(\varphi^{j}_{\alpha}(z,0)\right)_{j,\alpha}\right|^2$ is not identically zero.
 Therefore, up to pick-out a subsequence, by the Bochner theorem, \cite[Theorem VIII.8]{bochner-martin}, we obtain that  $\det\left(\varphi^{j}_{\alpha}(z,0)\right)_{j,\alpha}$ is nowhere vanishing, proving the claim.
 
 Therefore $\left\{\varphi^j(0)\right\}_{j\in\{1,\ldots,n\}}$ provides a global co-frame of holomorphic $1$-forms for $X_0$, and hence $X_0$ is holomorphically-parallelizable. 
\end{proof}

\medskip

Note that if $\de\varphi^j(t)=0$, then also $\de\varphi^j(0)=0$, for any $j\in\{1,\ldots,n\}$. In particular, by \cite[Theorem 1]{wang}, one recovers the stability result for differentiable families of complex tori by A. Andreotti, H. Grauert, and W. Stoll.

\begin{cor}[{\cite[Theorem 8]{andreotti-stoll} by A. Andreotti, H. Grauert, and W. Stoll}]
 Let $\left\{ X_t \right\}_{t \in (-\varepsilon,1)}$ be a differentiable family of compact complex manifolds, with $\varepsilon>0$ small enough. Suppose that $X_t$ is a complex torus for any $t\in(0,1)$. Then $X_0$ is a complex torus.
\end{cor}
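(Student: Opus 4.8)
The plan is to reduce the statement to Theorem~\ref{thm:main} together with Wang's structure theorem \cite[Theorem~1]{wang}, the additional input being that complex tori carry \emph{closed} holomorphic co-frames and that closedness is preserved by the limiting procedure in the proof of Theorem~\ref{thm:main}. For each $t\in(0,1)$ the torus $X_t=\C^n/\Lambda_t$ is holomorphically-parallelizable, its holomorphic co-tangent bundle being trivialized by the translation-invariant holomorphic $1$-forms; these are moreover $\de$-closed (equivalently, since $X_t$ is Kähler, every holomorphic $1$-form on $X_t$ is closed). Endowing $X_t$ with the flat Hermitian metric $g_t$ that makes such a co-frame $\{\varphi^j(t)\}_j$ pointwise orthonormal, and extending $\{g_t\}_t$ smoothly across $t=0$ to a smooth family of Hermitian metrics on $(-\varepsilon,1)$, places us in the setting of Theorem~\ref{thm:main}. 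Applying it, $X_0$ is holomorphically-parallelizable, with a global holomorphic co-frame $\{\varphi^j(0)\}_j$ arising as the uniform-on-compacts limit of $\{\varphi^j(t)\}_j$ as $t\to0^+$.

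Next I would verify that closedness survives this limit. Writing locally $\varphi^j(t)=\sum_\alpha\varphi^j_\alpha(z,t)\,\de z^\alpha$ with each $\varphi^j_\alpha(\sspace,t)$ holomorphic, the condition $\de\varphi^j(t)=0$ is precisely the symmetry $\frac{\del\varphi^j_\alpha}{\del z^\beta}=\frac{\del\varphi^j_\beta}{\del z^\alpha}$. Since $\varphi^j_\alpha(\sspace,t)\to\varphi^j_\alpha(\sspace,0)$ uniformly on compact subsets, the Cauchy estimates give convergence of the derivatives as well, so the symmetry persists in the limit and $\de\varphi^j(0)=0$ for every $j$.

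Finally I would identify $X_0$ through Wang's theorem. Write $X_0=G/D$ with $G$ a connected simply-connected complex Lie group and $D$ discrete; the parallelization is inherited from an invariant co-frame $\{\omega^k\}_k$ on $G$ satisfying the Maurer-Cartan equations $\de\omega^k=-\frac12\sum_{l,m}c^k_{lm}\,\omega^l\wedge\omega^m$. Since $X_0$ is compact and connected, the entries of the change-of-frame matrix from $\{\omega^k\}_k$ to $\{\varphi^j(0)\}_j$ are holomorphic functions on $X_0$, hence constant, and constitute an element of $GL(n,\C)$; therefore $\de\varphi^j(0)=0$ forces all the structure constants $c^k_{lm}$ to vanish. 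Thus the Lie algebra of $G$ is abelian, so $G\cong\C^n$, and the compact quotient $X_0=\C^n/D$ is a complex torus.

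The main obstacle is the first step, namely fitting the hypotheses of Theorem~\ref{thm:main}: one must exhibit a smooth family of Hermitian metrics on $(-\varepsilon,1)$ admitting, for $t\in(0,1)$, a pointwise-orthonormal holomorphic co-frame that is at the same time $\de$-closed, and in particular arrange the flat torus data to vary smoothly up to $t=0$. Once this is secured, the passage of closedness to the limit is a routine Cauchy-estimate argument and the concluding Lie-theoretic identification is entirely standard.
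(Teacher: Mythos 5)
Your overall strategy is the paper's own: put the flat metrics on the tori $X_t$, $t\in(0,1)$, and feed the family into Theorem \ref{thm:main}. But the step you defer --- ``extending $\{g_t\}_t$ smoothly across $t=0$ to a smooth family of Hermitian metrics on $(-\varepsilon,1)$'' --- is not a technicality that can be postponed: it is the decisive point of the whole proof, and your proposal leaves it unproved (you yourself label it ``the main obstacle''). Note that you cannot sidestep it by choosing more convenient metrics: on a complex torus every holomorphic $1$-form is translation-invariant, so a pointwise orthonormal holomorphic co-frame forces $g_t=\sum_j\varphi^j(t)\odot\bar\varphi^j(t)$ to be invariant, i.e.\ flat; the hypotheses of Theorem \ref{thm:main} thus pin the metrics down on $(0,1)$, and the only freedom is the $GL(n,\C)$-normalization of the co-frame. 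What must actually be shown is that this normalization can be made so that the flat data converge, as $t\to0^+$, to something nondegenerate: a priori the lattices $\Lambda_t$ could degenerate and $g_t$ could collapse or blow up in the limit. This is exactly where the paper invokes Andreotti--Stoll: writing $\Omega(t)$ for the period matrix of $X_t$, one has $\int_X\varphi^1(t)\wedge\cdots\wedge\varphi^n(t)\wedge\bar\varphi^1(t)\wedge\cdots\wedge\bar\varphi^n(t)=\det\left(\begin{smallmatrix}\Omega(t)\\ \bar\Omega(t)\end{smallmatrix}\right)$, and by \cite[page 341]{andreotti-stoll} this converges to $\det\left(\begin{smallmatrix}\Omega(0)\\ \bar\Omega(0)\end{smallmatrix}\right)\neq0$, which is what guarantees that $g_0:=\lim_{t\to0}g_t$ is a genuine Hermitian metric and hence that Theorem \ref{thm:main} is applicable. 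Without an argument of this kind (or a direct appeal to the Andreotti--Stoll normalization of periods), your proof does not get off the ground.

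The rest of your proposal is correct, and its ending differs mildly from the paper's. You conclude via closedness: $\de\varphi^j(t)=0$ passes to the limit by Cauchy estimates, and then a constant change of frame (holomorphic functions on the compact connected $X_0$ being constant) kills the Maurer--Cartan structure constants, so $G$ is abelian and $X_0=\C^n/D$ is a torus; this is precisely the route suggested in the remark the paper places just before the corollary. The paper's proof instead identifies the torus topologically: by Sakane \cite[Theorem 1]{sakane}, $b_1=\dim_\R \left. G\middle\slash[G,G]\right.$ for a holomorphically-parallelizable $G/D$, and $b_1(X_0)=b_1(X_t)=2n$ by Ehresmann, forcing $G$ abelian. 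Both endings are sound; yours uses closedness of the limiting co-frame, the paper's only its existence together with a Betti-number count. But neither ending matters until the metric-extension gap above is filled.
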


\begin{proof}
 We take a co-frame $\{\varphi^j(t)\}_j$ of holomorphic $1$-forms on $X_t=\left.\Gamma_t\middle\backslash \C^n \right.=(X,J_t)$ varying smoothly in $t$.
 We set $g_t:=\sum_j \varphi^j(t)\odot\bar\varphi^j(t)$. We claim that $g_0:=\lim_{t\to 0}g_t$ is a Hermitian metric on $X_0$.
 Indeed, we have
 \begin{eqnarray*}
 \lefteqn{ \int_X \varphi^1(0)\wedge\cdots\varphi^n(0)\wedge\bar\varphi^1(0)\wedge\cdots\bar\varphi^n(0) } \\[5pt]
 &=& \lim_{t\to0} \int_X \varphi^1(t)\wedge\cdots\varphi^n(t)\wedge\bar\varphi^1(t)\wedge\cdots\bar\varphi^n(t) \\[5pt]
 &=& \lim_{t\to0} \det \left(\begin{array}{c}\Omega(t) \\ \bar\Omega(t) \end{array}\right) \\[5pt]
 &=& \det \left(\begin{array}{c}\Omega(0) \\ \bar\Omega(0) \end{array}\right) \neq 0
 \end{eqnarray*}
 by \cite[page 341]{andreotti-stoll}, where $\Omega(t)$ is the period matrix of $X_t$.
 
 Finally, we claim that a compact complex holomorphically-parallelizable manifold $X = \left. G \middle\slash D \right.$ of complex dimension $n$ is a torus if and only if the first Betti number is $b_1 = 2n$.
 
 Indeed, note that $b_1 = \dim_\R \left. G \middle\slash \left[G,G\right] \right.$ by the Sakane theorem, \cite[Theorem 1]{sakane}, see also \cite[Corollary 1]{wang}.
 The statement follows, since $t \mapsto b_1(X_t)$ is locally constant at $0$ by the Ehresmann theorem.
\end{proof}

\medskip

In particular, one gets the following.

\begin{cor}
  In the class of holomorphically-parallelizable manifolds, respectively nilmanifolds, the property of being K\"ahler is stable for both small and large deformations.
\end{cor}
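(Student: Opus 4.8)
The plan is to reduce both assertions to the stability of complex tori established in the previous Corollary, using the classical fact that, within each of the two classes, being K\"ahler is equivalent to being a complex torus. To record this equivalence, let $X=\left. G\middle\slash D\right.$ be holomorphically-parallelizable; by Wang's structure theorem $G$ is a connected simply-connected complex Lie group, and the space of holomorphic $1$-forms on $X$ is $n$-dimensional and consists exactly of the left-invariant forms, spanned by a coframe $\{\varphi^j\}$ dual to a basis of the Lie algebra $\mathfrak{g}$. If in addition $X$ admits a compact K\"ahler metric, then every holomorphic $1$-form is $\de$-closed by Hodge theory; hence $\de\varphi^j=0$ for every $j$, and the Maurer-Cartan relation $\de\varphi(\cdot,\cdot)=-\varphi([\cdot,\cdot])$ forces $\mathfrak{g}$ to be abelian, so $G=\C^n$ and $X$ is a complex torus. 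Conversely, a complex torus is both holomorphically-parallelizable and K\"ahler. For nilmanifolds the corresponding equivalence is the theorem of Hasegawa, \cite[Theorem 1, Corollary]{hasegawa}: a nilmanifold carrying a K\"ahler structure is a complex torus.

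Granting the equivalence, I would conclude as follows. Suppose $X_t$ is holomorphically-parallelizable (respectively, a nilmanifold) and K\"ahler for every $t\in(0,1)$. By the equivalence, $X_t$ is then a complex torus for every $t\in(0,1)$. Since the underlying differentiable manifold is fixed along the family by the Ehresmann theorem, the previous Corollary applies and yields that $X_0$ is a complex torus; in particular $X_0$ is K\"ahler. As nothing in this argument requires $t$ to be close to $0$, both small and large deformations are covered.

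For the holomorphically-parallelizable case I would also give a self-contained route through Theorem \ref{thm:main}, avoiding torus stability as a black box: choosing the pointwise $g_t$-orthonormal coframe $\{\varphi^j(t)\}_j$ of holomorphic $1$-forms, K\"ahlerness of $X_t$ yields $\de\varphi^j(t)=0$ for $t\in(0,1)$; Theorem \ref{thm:main} produces the limit coframe $\{\varphi^j(0)\}_j$ on the holomorphically-parallelizable manifold $X_0$; the Remark following Theorem \ref{thm:main} upgrades this to $\de\varphi^j(0)=0$; and then $\omega_0:=\frac{\im}{2}\sum_j\varphi^j(0)\wedge\overline{\varphi^j(0)}$ is a closed positive $(1,1)$-form, that is, a K\"ahler metric on $X_0$. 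The main obstacle is precisely the nilmanifold case, where this internal route is unavailable because a complex nilmanifold need not be holomorphically-parallelizable, so one genuinely needs the external input that a K\"ahler nilmanifold is a torus. A secondary technical point, already present in the previous Corollary, is to choose the coframes $\{\varphi^j(t)\}_j$ and the metrics $g_t$ varying smoothly up to $t=0$; this is possible because the dimension of the space of holomorphic $1$-forms is constantly $n$ along the family.
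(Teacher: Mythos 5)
Your proposal is correct and takes essentially the same route as the paper: reduce K\"ahlerness within each class to being a complex torus (Wang's Corollary~2 for holomorphically-parallelizable manifolds, Benson--Gordon/Hasegawa for nilmanifolds) and then apply the stability of complex tori from the preceding Corollary (Andreotti--Grauert--Stoll). The only differences are inessential: you prove Wang's equivalence via Hodge theory and Maurer--Cartan instead of citing it, cite Hasegawa where the paper cites Benson--Gordon, and add an optional alternative route through Theorem~\ref{thm:main}.
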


\begin{proof}
 By \cite[Corollary 2]{wang}, holomorphically-parallelizable manifolds admit K\"ahler metrics if and only if they are complex tori. Respectively, by \cite[Theorem A]{benson-gordon}, nilmanifolds admit K\"ahler structures if and only if they are tori. By \cite[Theorem 8]{andreotti-stoll}, the statement follows.
\end{proof}

\medskip

\begin{rem}
 Complex solvable Lie groups up to complex dimension $5$ are classified by I. Nakamura in \cite[\S6]{nakamura}. For example, the family in class (IV.5) is characterized by the structure equations
 $$ \de \varphi^1 \;=\; 0 \;, \quad \de \varphi^2 \;=\; \varphi^1\wedge\varphi^2 \;, \quad \de\varphi^3 \;=\; \alpha\,\varphi^1\wedge\varphi^3 \;, \qquad \de\varphi^4 \;=\; -(1+\alpha)\, \varphi^1\wedge\varphi^4 \;, $$
 where $\alpha\in\C$ is such that $\alpha(1+\alpha)\neq0$. Note that, for $\alpha\to 0$, the above family degenerates to the class (IV.4), which is characterized by the structure equations
 $$ \de \varphi^1 \;=\; 0 \;, \quad \de \varphi^2 \;=\; 0 \;, \quad \de\varphi^3 \;=\; \varphi^2\wedge\varphi^3 \;, \qquad \de\varphi^4 \;=\; \varphi^2\wedge\varphi^4 \;. $$
 
 Holomorphically-parallelizable solvmanifolds up to complex dimension $5$ are classified by D. Guan in \cite[Classification Theorem, Theorem 2, Theorem 3]{guan}. In this case, we cannot find an example showing that the property of being holomorphically-parallelizable with a fixed universal covering is not preserved at the limit. We wonder whether such an example can be found.
\end{rem}

\begin{rem}
 As pointed out by the Referee, we ask whether the result in Theorem \ref{thm:main} may be stated in the category of compact complex manifolds.
\end{rem}

\end{document}